\numberwithin{equation}{section}
\tikzset{sgplattice/.style={inner sep=1pt,norm/.style={red!50!blue},char/.style={blue!50!black},
  lin/.style={black!50}},cnj/.style={black!50,yshift=-2.5pt,left=-1pt of #1,scale=0.5,fill=white}}
\DeclareFontFamily{U}{mathb}{\hyphenchar\font45}
\DeclareFontShape{U}{mathb}{m}{n}{
      <5> <6> <7> <8> <9> <10> gen * mathb
      <10.95> mathb10 <12> <14.4> <17.28> <20.74> <24.88> mathb12
      }{}
\DeclareSymbolFont{mathb}{U}{mathb}{m}{n}
\DeclareMathSymbol{\righttoleftarrow}{3}{mathb}{"FD}
\theoremstyle{plain}
\newtheorem{prop}{Proposition}[section]
\newtheorem{lemm}[prop]{Lemma}
\theoremstyle{definition}
\newtheorem{rema}[prop]{Remark}
\newcommand\symb[1]{\langle #1 \rangle}
\def\ra{\rightarrow}
\def\cH{{\mathcal H}}
\def\cK{{\mathcal K}}
\def\cM{{\mathcal M}}
\def\cS{{\mathcal S}}
\def\fS{{\mathfrak S}}
\def\mh{{\mathfrak h}}
\def\fS{{\mathfrak S}}
\def\bP{{\mathbb P}}
\def\bQ{{\mathbb Q}}
\def\bZ{{\mathbb Z}}
\def\bM{{\mathbb M}}
\def\bC{{\mathbb C}}
\def\C{C_N\times C_{MN}}
\def\Tor{\mathrm{Tors}}
\def\SL{\mathrm{SL}}
\def\Hom{\mathrm{Hom}}
\def\lim{\mathrm{lim}}
\begin{document}

\title[Modular symbols]{Modular Symbols and Equivariant Birational Invariants}

\author[Zhijia Zhang]{Zhijia Zhang}

\address{
Courant Institute,
  251 Mercer Street,
  New York, NY 10012, USA
}

\email{zhijia.zhang@cims.nyu.edu}

\date{\today}

\begin{abstract}
We study relations between the classical modular symbols associated with congruence subgroups and Kontsevich-Pestun-Tschinkel groups $\cM_n(G)$ associated with finite abelian groups $G$.
\end{abstract}

\maketitle

\section{Introduction}
\label{sect:intro}

Let $G$ be a finite abelian group, acting regularly and generically freely on a smooth projective variety of dimension $n\geq 2$ over an algebraically closed field of characteristic zero. An {\em equivariant birational invariant} of such actions was introduced in \cite{KPT}. It takes values in the abelian group 
$$
\cM_n(G),
$$
defined via explicit generators and relations. This group and its generalizations in \cite{BnG} encode intricate geometric information,  leading to new results in equivariant birational geometry, see, e.g., \cite{HKTsmall}, \cite{KT-vector}, \cite{TYZ} and \cite{TYZ-3}. On the other hand, the simplicity of the defining relations of $\cM_n(G)$ reveals a rich arithmetic nature: it was found in \cite{KPT} that $\cM_n(G)$ carry Hecke operators, formal (co-)multiplication maps, and are closely related to Manin's modular symbols for modular forms of weight $2$, when $n=2$.


In this note, we continue the investigation of arithmetic properties of $\cM_n(G)$, with a particular focus on their relations with Manin symbols. Our main results are:
\begin{itemize}
    \item We settle the algebraic structure of $\cM^-_2(G)$, a quotient of the group $\cM_2(G)$, for any finite abelian group $G$, see Proposition \ref{prop:mainformminus}. The key ingredient is the construction of an isomorphism between
 $\cM_2^-(G)$ and the $\bZ$-module of classical Manin symbols for certain congruence subgroups.
 \item 
 We prove a conjecture from \cite[Section 11]{KPT} regarding the $\bQ$-ranks of $\cM_2(G)\otimes\bQ$ when $G$ is cyclic, and generalize the result to any finite abelian group $G$.
    \end{itemize}
    
Here is the roadmap of the paper. In Section~\ref{sect:back}, we recall relevant definitions. In Section ~\ref{sect:modsymb}, we study the connections between Manin symbols and the groups $\cM^-_2(G)$. Dimensional formulae for $\cM_2(G)\otimes\bQ$ are given in Section~\ref{sect:formulae}.

\

\noindent
{\bf Acknowledgments:} The author is grateful to Yuri Tschinkel and Brendan Hassett for many helpful conversations.

\section{Background}\label{sect:back}
Let $G$ be a finite {\em abelian} group, 
$
G^\vee=\Hom(G,\bC^\times)
$ 
its character group, $n$ a positive integer and
$$
\cS_n(G) 
$$
the $\bZ$-module freely generated by $n$-tuples of characters of $G$:
$$
\beta=(b_1,\ldots, b_n),\quad \text{such that}\quad \sum_{j=1}^n\bZ b_j=G^\vee.
$$

The group $\cM_n(G)$ is defined via the quotient 
$$
\cS_n(G) \to \cM_n(G)
$$
by the {\em reordering relation} 

\

\noindent
{\bf (O)}: for all $\beta=(b_1,\ldots, b_n)$ and all $\sigma\in \fS_n$, one has
$$
\beta= \beta^{\sigma}:=(b_{\sigma(1)},\ldots, b_{\sigma(n)}),
$$
\noindent and the {\em motivic blowup relation} 

\

\noindent
{\bf (M)}: for $\beta=(b_1,b_2,b_3,\ldots, b_n)$, one has
$
\beta=\beta_1 +\beta_2,
$
where 
\begin{equation*}
\beta_1:= (b_1-b_2, b_2, b_3, \ldots, b_n), \quad 
\beta_2:= (b_1, b_2-b_1, b_3,\ldots, b_n), \quad n\ge 2.
\end{equation*}
A closely related group $\cM^-_n(G)$  is defined as the quotient of $\cS_n(G)$ by {\bf (O)}, {\bf (M)}  and the {\em anti-symmetry relation} {\bf (A)}:

\

\noindent
{\bf (A)}: $(b_1, \ldots, b_n)=-(-b_1, \ldots, b_n)$, for all generating symbols $\beta$.

\

For clarity, we distinguish symbols in $\cM_n(G)$ and $\cM_n^-(G)$ with the following notation:
\begin{itemize}
    \item $\langle b_1,\ldots,b_n\rangle\in\cM_n(G)$,    \item $\langle b_1,\ldots,b_n\rangle^-\in\cM^-_n(G)$.
\end{itemize}

\begin{rema}
    The original definition of  relation {\bf (M)} in \cite{KPT} is more involved, but is equivalent to the version here, by \cite[Proposition 2.1]{HKTsmall}.  
\end{rema}

When $n=1$, we have
$$
\cM_1(G)=\begin{cases}
 \bZ^{\phi(N)}& G=\bZ/N, N\geq 1,\\
 0&\text{otherwise,}
\end{cases}
$$where $\phi(n)$ is Euler's totient function. 

When $n=2$, $\cM_2(G)$ can be nontrivial for cyclic and bi-cyclic groups. Below, we present results of numerical computations of $\bQ$-ranks of $\cM_2(G)$ and $\cM_2(G)^-$. Let 
$$
\cM_2(G)_\bQ:=\cM_2(G)\otimes\bQ,\quad\text{and}\quad\cM^-_2(G)_\bQ:=\cM^-_2(G)\otimes\bQ.
$$
In the following tables, $d$ and $d^{-}$ denote respectively
$$
\dim_\bQ(\cM_2(G)_\bQ)\quad\text{and}\quad\dim_\bQ(\cM_2^-(G)_\bQ).
$$

When $G=C_N$ is cyclic, we have 
$$
\begin{tabular}{c|ccccccccccccccccc}
${\it N}$           & 2 & 3 & 4 & 5   & 7   & 9  & 11 &12& 13 &16& 17 & 19            & 23& 29&31&37 \\
\hline 
$d$            & 0 & 1& 1 & 2   & 3  & 5   &  6 &7  & 8 &10& 13& 16  &23  &  36&41&58  \\

$d^-$         & 0 & 0 & 0  & 0  & 0 &   1  & 1  &  2 & 2 &  3    & 5 &    7     &12&  22   & 16&40    \\

\end{tabular}
$$

\

When $G=C_{N_1}\times C_{N_2}$ is bi-cyclic, we have 
$$
\begin{tabular}{c|ccccccccccccccc}
${\it N}_1$           & 2 & 2 & 2 & 2  & 2 & 2  & 3 & 3& 3   & 3 & 4 & 4 & 4            & 5 & 6 \\
\hline
${\it N}_2$           & 2 & 4 & 6 & 8  & 10 & 16 & 6 & 3 & 9  & 27 & 8 & 16 & 32      & 25 & 36  \\
\hline
$d$            & 0 & 2 & 3 & 6  & 7 & 21 &  15 & 7   & 37 &  235   & 33 & 105 &  353     &702  &  577  \\
$d^-$         & 0 & 0 & 0  & 1  & 1 &   9  & 7  &  3 & 19 &  163    & 17 &    65     &257&  502   & 433    \\

\end{tabular}\\
$$

\

In particular, when $G=C_{p}\times C_{p}$, for prime $p$, we have
 $$
 \begin{tabular}{c|ccccccccccccc}
${\it p}$           & 5 & 7    & 11 & 13& 17   & 19 & 23 & 29 & 31  & 37  \\
\hline
$d$              & 46 & 159    & 855 & 1602 &  4424    & 6759 & 14047 & 34314 &   44415 &  88254\\
$d^-$              & 22 & 87    & 555 & 1098 & 3272   & 5139 & 11143  & 28434 &   37215 &  75942\\
\end{tabular}\\
$$
It was discovered and proved in \cite{KPT} that
$$
\dim(\cM_2^-(C_N)_\bQ)=
\begin{cases}
1-\frac{\phi(N)+\phi(N/2)}{2}+\frac{N\cdot\phi(N)}{24}\displaystyle\cdot{\prod_{p\mid N}}(1+\frac1p)&\text{$N$ even,}\\1-\frac{\phi(N)}{2}+\frac{N\cdot\phi(N)}{24}\cdot\displaystyle{\prod_{p\mid N}}(1+\frac1p)&\text{$N$ odd.}
\end{cases}
$$ 
The proof is based on an isomorphism between $\cM_2^-(C_N)_\bQ$ and the space of modular symbols of the congruence subgroups $\Gamma_1(N)$. From the tables above, we speculate the following identities
$$
\dim(\cM_2(C_p\times C_p)_\bQ )\stackrel{?}{=} \frac{(p-1)(p^3+6p^2-p+6)}{24},
$$
$$
\dim(\cM^-_2(C_p\times C_p)_\bQ )\stackrel{?}{=}  \frac{(p-1)(p^3-p+12)}{24},
$$
also signaling a strong connection to modular forms. The remaining part of this paper is dedicated to a proof of these two identities in the general setting. 

First, observe that the common factor ${(p-1)}$ indicates that the structure of $\cM_2(G)$ and $\cM_2^-(G)$ can be simplified when $G$ is a bi-cyclic group. We explain in detail the simplification for $\cM_2^-(G)$ below. The same argument also applies to $\cM_2(G)$.

\subsection*{Bi-cyclic groups} Let $G=\C$ be a finite bi-cyclic group. By definition, the $\bZ$-module $\cM_2^-(G)$ is generated by symbols 
$$
\beta:=\symb{(a_1, b_1), (a_2, b_2)}^-
$$
such that $$a_1, a_2\in C_N,\quad b_1, b_2\in C_{MN},\quad\bZ(a_1, b_1)+\bZ(a_2, b_2)=\C,$$ 
and subject to  relations
\begin{itemize}
    \item 
$\beta=\symb{(a_2, b_2), (a_1, b_1)}^-$,
    \item 
    $\beta=\symb{(a_1-a_2, b_1-b_2), (a_2, b_2)}^-+\symb{(a_1, b_1), (a_2-a_1, b_2-b_1)}^-,$
    \item 
    $\beta=-\symb{(-a_1, -b_1), (a_2, b_2)}^-.$
\end{itemize}
Formally, we can also denote $\beta$ by a $2\times 2$ matrix 
$$
\begin{pmatrix}a_1&a_2\\b_1&b_2\end{pmatrix}
$$ 
and assign a determinant:
$$
\det(\beta):=a_1b_2-a_2b_1\in(\bZ/N)^\times,
$$ 
where the operation takes place modulo $N$. From the defining relations {\bf(O)}, {\bf (M)} and {\bf (A)}, one can see that the linear combinations of symbols with the same determinant up to $\pm1$ form a submodule of $\cM_2^-(G).$ More precisely, for $k\in(\bZ/N)^\times$, let
\begin{align}\label{eqn:S2K}
  \mathcal{S}_{2,k}(G)
\end{align}
be the {\em finite set} consisting of matrices/symbols
$$
\beta:=\begin{pmatrix}
    a_1&a_2\\
    b_1&b_2
\end{pmatrix}=\langle(a_1,b_1),(a_2,b_2)\rangle^-
$$ 
such that 
\begin{itemize}
    \item  $(a_1,b_1),(a_2,b_2)\in (\C)^\vee$,
    \item $\bZ (a_1,b_1)+\bZ (a_2,b_2)=(\C)^\vee$,
    \item $\det(\beta)=k\pmod N$,
\end{itemize}
and 
$$
\cM_{2,k}^-(G)
$$ 
be the $\bZ$-module freely generated by elements in the set
$$
\cS_{2,k}(G)\cup\cS_{2,-k}(G)
$$
subject to relations {\bf(O)}, {\bf(M)} and {\bf(A)}. It follows that $\cM_{2,k}^-(G)$ can be naturally identified as a submodule of $\cM_2^-(G)$. Moreover, the algebraic structure of $\cM_{2,k}^-(G)$ is independent of $k$: consider the maps
$$
\cM_{2,1}^-(G)\to\cM_{2,k}^-(G),\quad\symb{(a_1,b_1),(a_2,b_2)}^-\mapsto\symb{(ka_1, b_1),(ka_2, b_2)}^-\,\,;
$$
$$
\cM_{2,k}^-(G)\to\cM_{2,1}^-(G),\quad\symb{(a_1,b_1),(a_2,b_2)}^-\mapsto\symb{(a_1/k, b_1),(a_2/k, b_2)}^-.
$$
These maps respect the defining relations and are inverse to each other. It follows that we have isomorphisms of $\bZ$-modules, when $N\geq 3$:
$$
\cM_2^-(G)\simeq\bigoplus_{k\in (\bZ/N)^\times/\langle\pm1\rangle} \cM^-_{2,k}(G)\simeq\bigoplus_{\frac{\phi(N)}{2}\,\mathrm{ copies}}\cM^-_{2,1}(G).
$$
\subsection*{Multiplication and Co-multiplication}
Given an exact sequence of finite abelian groups 
$$
0\to G'\to G\to G''\to0,
$$ 
consider the dual sequence of their character groups
$$
0\to A''\to A\to A'\to0.
$$
For all integers $n=n'+n'', n',n''\geq1$, one can define a $\bZ$-bilinear {\em multiplication} map 
$$
\nabla: \cM_{n'}(G')\otimes\cM_{n''}(G'')\to\cM_{n}(G)
$$ 
given on the generators by $$\symb{a_1',\ldots, a_{n'}'}\otimes\symb{a_1'',\ldots, a_{n''}''}\to\sum\symb{a_1,\ldots,a_{n'},a_1'',\ldots, a_{n''}''},
$$ where the sum is over all possible lifts $a_i\in A$ of $a_i'\in A'$; and $a_i''\in A$ are understood via the embedding $A''\hookrightarrow A$.

Dual to this construction is the $\bZ$-linear {\em co-multiplication} map when $G''$ is non-trivial:
\begin{align}
\label{eq:comultdef}
\Delta: \cM_{n}(G)\to\cM_{n'}(G')\otimes \cM_{n''}^-(G'').
\end{align}
This map is defined on the generators by 
$$
\symb{a_1,\cdots ,a_n}\mapsto\sum \symb{a_{I'}\mod A''}\otimes\symb{a_{I''}}^-,
$$
where the sum is over all partition of $\{1,\ldots,n\}=I'\cup I''$ such that
 \begin{itemize}
    \item $\#I'=n', \quad\#I''=n'';$
    \item for all $j\in I''$, $a_j\in A''\subset A$; and for any $i\in I'$, $a_i \mod A''$ is understood as projection of $a_i\in A$ in $A/A''$;
    \item the elements $a_j, j\in I''$, span $A''$.
\end{itemize} 
The correctness of $\nabla$ and $\Delta$ can be verified directly \cite{KPT}; they maps also descend to well-defined $\bZ$-module homomorphisms
$$
\nabla^-: \cM_{n'}^-(G')\otimes\cM_{n''}^-(G'')\to\cM_{n}^-(G),
$$
$$
\Delta^-: \cM_{n}^-(G)\to\cM^-_{n'}(G')\otimes \cM_{n''}^-(G'').
$$
\section{Congruence subgroups and Modular Symbols}\label{sect:modsymb}
\subsection*{Congruence subgroups} Connections between $\cM_2^-(C_N)$ and a classical congruence subgroup 
$$
\Gamma_1(N)=
\left\{
\gamma\in\SL_2(\bZ): \gamma=
\begin{pmatrix}
    1&*\\
    0&1
\end{pmatrix}
\right\},\quad N\geq2,
$$
were discovered in \cite[Section 11]{KPT}. To extend their results to bi-cyclic groups, we introduce a new family of congruence subgroups
\begin{align}\label{eq:Gammadef1}
    \Gamma(N, MN):=\left\{\begin{pmatrix}a&b\\c&d\end{pmatrix}\in\SL_2(\bZ)\,\,\middle\vert\,\,\begin{array}{l} a\equiv 1\pmod{N}\\ b\equiv 0\pmod {N}\\ c\equiv 0\pmod{MN}\\ d\equiv 1\pmod {MN}
\end{array}\right\},\quad N\geq2.
\end{align}
To see that $\Gamma(N, MN)$ is indeed a congruence subgroup, one can check that the definition \eqref{eq:Gammadef1} forces
$$
a\equiv 1\mod{MN},
$$ 
leading to an equivalent description of $\Gamma(N,MN)$:
\begin{align}\label{eq:Gammadef2}
    \Gamma(N, MN)=\left\{\begin{pmatrix}a&b\\c&d\end{pmatrix}\in\SL_2(\bZ)\,\,\middle\vert\,\,\begin{array}{l}a\equiv 1\pmod{MN}\\ b\equiv 0\pmod {N}\\c\equiv 0\pmod{MN}\\ d\equiv 1\pmod {MN}
\end{array}\right\},\quad N\geq2.
\end{align}
Using \eqref{eq:Gammadef2}, one can easily verify the following inclusion relations
$$
\SL_2(\bZ)\supset\Gamma_1(MN)\supset\Gamma(N, MN)\supset
\Gamma(MN)
$$
and conclude that $\Gamma(N,MN)$ is a congruence subgroup.
\begin{lemm}\label{lemm:quogmM}
$[\Gamma(N,MN):\Gamma(MN)]=M$.
\end{lemm}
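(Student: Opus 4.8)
The plan is to exhibit an explicit isomorphism $\Gamma(N,MN)/\Gamma(MN)\simeq\bZ/M$, from which the index $M$ follows at once. The key observation is that the only condition distinguishing the larger group $\Gamma(N,MN)$ from $\Gamma(MN)$ is the congruence on the upper-right entry: on $\Gamma(N,MN)$ one requires $b\equiv 0\pmod N$, whereas on $\Gamma(MN)$ one requires $b\equiv 0\pmod{MN}$, while the conditions $a\equiv 1$, $c\equiv 0$, $d\equiv 1\pmod{MN}$ coincide (here I use the equivalent description \eqref{eq:Gammadef2}). This suggests reading off the ``defect'' $b/N$ modulo $M$. Accordingly, I would first define
$$
\psi:\Gamma(N,MN)\to\bZ/M,\qquad \begin{pmatrix}a&b\\c&d\end{pmatrix}\mapsto \frac{b}{N}\bmod M,
$$
which is well defined since $b\equiv 0\pmod N$ on $\Gamma(N,MN)$.

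The main step, and the only place requiring genuine care, is checking that $\psi$ is a homomorphism. This is exactly where the stronger congruence $a\equiv 1\pmod{MN}$ from \eqref{eq:Gammadef2} (rather than merely $a\equiv 1\pmod N$) is indispensable. Writing the $(1,2)$-entry of a product $\gamma_1\gamma_2$ as $a_1b_2+b_1d_2$ and substituting $a_1\equiv 1$, $d_2\equiv 1\pmod{MN}$ together with $b_1,b_2\equiv 0\pmod N$, the entry becomes $b_1+b_2$ plus a multiple of $MN$. Dividing by $N$ then yields $\psi(\gamma_1)+\psi(\gamma_2)$ modulo $M$. The point is that the cross-terms which would otherwise obstruct additivity are annihilated precisely by the congruences on $a$ and $d$ modulo $MN$.

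It then remains to verify surjectivity and identify the kernel, both routine. Surjectivity is immediate from the unipotent matrices $\begin{pmatrix}1&rN\\0&1\end{pmatrix}\in\Gamma(N,MN)$, which realize every residue $r\in\bZ/M$. The kernel consists of those $\gamma\in\Gamma(N,MN)$ with $b\equiv 0\pmod{MN}$; combined with $a\equiv 1$, $c\equiv 0$, $d\equiv 1\pmod{MN}$, this is precisely the defining condition of $\Gamma(MN)$. By the first isomorphism theorem, $\Gamma(N,MN)/\Gamma(MN)\simeq\bZ/M$, so $[\Gamma(N,MN):\Gamma(MN)]=M$. I anticipate no real obstacle beyond the bookkeeping in the homomorphism check: the whole statement is a direct consequence of the congruence conditions, with the homomorphism property being the one assertion that genuinely uses the equivalent form \eqref{eq:Gammadef2}.
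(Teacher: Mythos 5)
Your proposal is correct and is essentially the paper's own proof: the paper uses exactly the same map $\begin{pmatrix}a&b\\c&d\end{pmatrix}\mapsto b/N \bmod M$, asserts it is a surjective homomorphism with kernel $\Gamma(MN)$, and concludes. You have merely filled in the routine verifications (the homomorphism check via $a_1\equiv d_2\equiv 1\pmod{MN}$, surjectivity via the unipotents $\begin{pmatrix}1&rN\\0&1\end{pmatrix}$) that the paper leaves implicit.
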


\begin{proof}

Consider the surjective group homomorphism:
$$
\Gamma(N, MN)\to\bZ/M\bZ,\quad
\begin{pmatrix}
a&b\\c&d
\end{pmatrix}
\mapsto \frac bN\pmod M.
$$
The kernel of the homomorphism is $\Gamma(MN)$. In particular, 
$$
\Gamma(N,MN)/\Gamma(MN)\simeq\bZ/m\bZ.
$$
\end{proof}
To study the space of Manin symbols associated with $\Gamma(N,MN)$, one needs a description of the right cosets $\Gamma(N, MN)\setminus\SL_2(\bZ)$. Now, we show that $\Gamma(N, MN)\setminus\SL_2(\bZ)$ coincides with the set $\cS_{2,1}(\C)$ introduced in \eqref{eqn:S2K}. Consider a natural map:
\begin{align}\label{eq:ltcsiso}
    \Gamma(N, M&N)\setminus\SL_2(\bZ)\to\cS_{2,1}(\C),\\
    \begin{pmatrix}
    a&b\\c&d
    \end{pmatrix}\mapsto &\begin{pmatrix}
    a\mod N&&b\mod N\\c\mod{MN}&&d\mod{MN}
    \end{pmatrix}.\nonumber
\end{align}
The correctness of \eqref{eq:ltcsiso} as a bijection between finite sets follows from elementary computations. Moreover, we have the following lemmas.
\begin{lemm}\label{lemm:equivclassgmn}
For $\gamma_i=\begin{pmatrix}
a_i&b_i\\c_i&d_i
\end{pmatrix}\in\SL_2(\bZ),\, i=1,2,$ one has 
$$
\begin{pmatrix}
a_1&b_1\\c_1&d_1
\end{pmatrix}\equiv\begin{pmatrix}
a_2&b_2\\c_2&d_2
\end{pmatrix}\pmod{\Gamma(N,MN)}
$$
if and only if 
$
\quad\begin{cases}
     a_1\equiv a_2\pmod N, & c_1\equiv c_2\pmod {MN},  \\
     b_1\equiv b_2\pmod {N}, &  d_1\equiv d_2\pmod{MN}.
\end{cases}
$
\end{lemm}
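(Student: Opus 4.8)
The plan is to unwind the coset relation into a single matrix multiplication and then read off congruences entry-by-entry. Since we are working with right cosets $\Gamma(N,MN)\setminus\SL_2(\bZ)$, the relation $\gamma_1\equiv\gamma_2\pmod{\Gamma(N,MN)}$ means exactly that $\eta:=\gamma_1\gamma_2^{-1}\in\Gamma(N,MN)$. I would prove the two implications separately, and in each case I would exploit the two equivalent descriptions \eqref{eq:Gammadef1} and \eqref{eq:Gammadef2} of $\Gamma(N,MN)$, keeping careful track of which congruence holds modulo $N$ and which modulo $MN$.

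For the forward implication I would write $\gamma_1=\eta\gamma_2$ with $\eta=\begin{pmatrix}p&q\\r&s\end{pmatrix}\in\Gamma(N,MN)$, so that $p\equiv s\equiv 1\pmod{MN}$, $r\equiv 0\pmod{MN}$ and $q\equiv 0\pmod N$. Expanding the product gives $a_1=pa_2+qc_2$, $b_1=pb_2+qd_2$, $c_1=ra_2+sc_2$, $d_1=rb_2+sd_2$. Reducing the first two equations modulo $N$ and the last two modulo $MN$ and substituting the congruences on $p,q,r,s$ yields the four asserted congruences at once; note that this direction uses no information about $\det\gamma_2$.

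For the converse I would assume the four congruences and verify directly that $\eta=\gamma_1\gamma_2^{-1}$ lands in $\Gamma(N,MN)$. Using $\gamma_2^{-1}=\begin{pmatrix}d_2&-b_2\\-c_2&a_2\end{pmatrix}$ (valid since $\det\gamma_2=1$), the entries of $\eta$ are $a=a_1d_2-b_1c_2$, $b=b_1a_2-a_1b_2$, $c=c_1d_2-d_1c_2$, and $d=d_1a_2-c_1b_2$. I would check the two top entries against description \eqref{eq:Gammadef1}: reducing modulo $N$ via $a_1\equiv a_2$ and $b_1\equiv b_2$ collapses $a$ to $a_2d_2-b_2c_2$ and $b$ to $0$; and I would check the two bottom entries against the finer description, reducing modulo $MN$ via $c_1\equiv c_2$ and $d_1\equiv d_2$ to collapse $c$ to $0$ and $d$ to $a_2d_2-b_2c_2$. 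Here the determinant relation $a_2d_2-b_2c_2=1$ is precisely what turns the $a$- and $d$-entries into $1$, so that $\eta\in\Gamma(N,MN)$.

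There is no real obstacle in this lemma; the only point requiring attention is the bookkeeping of moduli. The genuine content is that the mod-$N$ conditions on the top row together with the mod-$MN$ conditions on the bottom row, combined with $\det\gamma_2=1$, already force the $\pmod{MN}$ defining conditions of $\Gamma(N,MN)$, which is exactly why both descriptions \eqref{eq:Gammadef1} and \eqref{eq:Gammadef2} must be invoked. I would close by remarking that this lemma is simply the statement that the map \eqref{eq:ltcsiso} is well-defined and injective on right cosets, the complementary surjectivity and bijectivity onto $\cS_{2,1}(\C)$ being the elementary count mentioned immediately after \eqref{eq:ltcsiso}.
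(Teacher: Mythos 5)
Your proof is correct and is precisely the ``basic modular arithmetic'' that the paper delegates to \cite{cremonagamma1}: unwind the right-coset relation to $\gamma_1\gamma_2^{-1}\in\Gamma(N,MN)$, reduce entry-by-entry keeping the top row modulo $N$ and the bottom row modulo $MN$, and invoke $\det\gamma_2=1$ only in the converse direction. One small quibble: your closing claim that both descriptions must be invoked overstates matters---the coarser description \eqref{eq:Gammadef1} suffices in both directions (in the forward direction $p\equiv 1\pmod N$ and $q\equiv 0\pmod N$ already yield the top-row congruences), since the equivalence of \eqref{eq:Gammadef1} and \eqref{eq:Gammadef2} is established separately in the paper.
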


\begin{proof}
Basic modular arithmetic, as in \cite[Lemma 3.1]{cremonagamma1}. 
\end{proof}

\begin{lemm}\label{lemm:mapgnrtcond}
Let $\begin{pmatrix}
a&b\\c&d
\end{pmatrix}\in\SL_2(\bZ)$, and $a', b', c', d'\in \bZ$  such that
$$
\begin{cases}
     a'\equiv a\pmod N, & c'\equiv c\pmod {MN},  \\
     b'\equiv b\pmod {N}, &  d'\equiv d\pmod{MN},
\end{cases}
$$
with $0\leq a',b'< N$ and $0\leq c',d'< MN$. Then we have  
$$
\begin{pmatrix}
a'&b'\\c'&d'
\end{pmatrix}\in\cS_{2,1}(\C).
$$
\end{lemm}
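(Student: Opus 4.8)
The plan is to verify directly the three membership conditions defining $\cS_{2,1}(\C)$ for the symbol attached to $\begin{pmatrix} a' & b' \\ c' & d'\end{pmatrix}$, namely $\symb{(a',c'),(b',d')}^-$ under the convention identifying a matrix $\begin{pmatrix} a_1 & a_2 \\ b_1 & b_2\end{pmatrix}$ with $\symb{(a_1,b_1),(a_2,b_2)}^-$. The first condition, that the two columns $(a',c')$ and $(b',d')$ lie in $(\C)^\vee\simeq C_N\times C_{MN}$, is immediate from the stated ranges $0\leq a',b'<N$ and $0\leq c',d'<MN$, which exhibit the top entries as representatives of $\bZ/N$ and the bottom entries as representatives of $\bZ/MN$.

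For the determinant condition I would reduce everything modulo $N$. From $c'\equiv c\pmod{MN}$ and $d'\equiv d\pmod{MN}$ one also gets $c'\equiv c$ and $d'\equiv d$ modulo $N$, so all four entries $a',b',c',d'$ agree with $a,b,c,d$ modulo $N$. Hence
$$
\det\begin{pmatrix} a' & b' \\ c' & d'\end{pmatrix}=a'd'-b'c'\equiv ad-bc=1\pmod N,
$$
using $\begin{pmatrix} a & b \\ c & d\end{pmatrix}\in\SL_2(\bZ)$.

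The substantive step is the generation condition $\bZ(a',c')+\bZ(b',d')=(\C)^\vee$. Here I would introduce the reduction map
$$
\pi\colon\bZ^2\to C_N\times C_{MN},\quad (x,y)\mapsto(x\bmod N,\,y\bmod MN),
$$
and check that it is a surjective group homomorphism, as it sends $(1,0)$ and $(0,1)$ to generators of the two cyclic factors. Since $\begin{pmatrix} a & b \\ c & d\end{pmatrix}\in\SL_2(\bZ)$, its columns $v_1=(a,c)$ and $v_2=(b,d)$ form a $\bZ$-basis of $\bZ^2$, so $\bZ v_1+\bZ v_2=\bZ^2$. Applying $\pi$ and using surjectivity,
$$
\bZ(a',c')+\bZ(b',d')=\bZ\,\pi(v_1)+\bZ\,\pi(v_2)=\pi(\bZ^2)=C_N\times C_{MN},
$$
which is exactly the required spanning statement.

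I expect the only real subtlety to be organizational rather than computational: because the top and bottom coordinates are reduced modulo the different integers $N$ and $MN$, one cannot argue factor-by-factor, nor invoke surjectivity of reduction of $\SL_2(\bZ)$ modulo a single integer. Packaging both reductions into the single surjective homomorphism $\pi\colon\bZ^2\to C_N\times C_{MN}$ is what makes the generation condition fall out automatically from the $\bZ$-invertibility of the original matrix; once this is set up, all three conditions are routine.
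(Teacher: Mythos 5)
Your proof is correct and in substance the same as the paper's: the paper verifies the generation condition by multiplying $\begin{pmatrix} a'&b'\\ c'&d'\end{pmatrix}$ on the right by the integral adjugate $\begin{pmatrix} d&-b\\ -c&a\end{pmatrix}$ and observing that the product lies in $\Gamma(N,MN)$, hence the columns produce $(1,0)$ and $(0,1)$ in $\C$ --- which is exactly the explicit form of your observation that the columns of an $\SL_2(\bZ)$ matrix form a $\bZ$-basis of $\bZ^2$ and then pushing forward under the surjective reduction $\pi\colon\bZ^2\to C_N\times C_{MN}$. The only difference is presentational: you additionally spell out the membership and determinant-mod-$N$ conditions, which the paper treats as immediate by declaring that it suffices to check the spanning condition.
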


\begin{proof}
It suffices to check $\bZ(a',c')+\bZ(b', d')=\C$. Indeed,
$$
\begin{pmatrix}
a'&b'\\c'&d'
\end{pmatrix}\begin{pmatrix}     
d&-b\\-c&a
\end{pmatrix}=\begin{pmatrix}
a'd-b'c&-a'b+ab'\\c'd-d'c&-c'b+ad'
\end{pmatrix}\in\Gamma(N, MN),
$$
since $ad-bc=1.$ This shows $(a', c')$ and $(b', d')$ generate the generators $(0,1)$ and $(1,0)\in\C$. 
\end{proof}
\begin{prop}
The map \eqref{eq:ltcsiso} is a well-defined bijection between finite sets. 
\end{prop}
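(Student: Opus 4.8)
The plan is to verify the three defining properties of a bijection in turn, leaning on the two preceding lemmas, and to isolate surjectivity as the only step that requires a genuine construction.

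First I would dispose of well-definedness and injectivity simultaneously. The map in \eqref{eq:ltcsiso} sends a matrix $\gamma\in\SL_2(\bZ)$ to the quadruple of its reductions $(a\bmod N,\ b\bmod N,\ c\bmod{MN},\ d\bmod{MN})$. By Lemma~\ref{lemm:mapgnrtcond} this quadruple indeed defines an element of $\cS_{2,1}(\C)$ (the determinant condition $\det\equiv 1\bmod N$ being automatic, since reducing $ad-bc=1$ modulo $N$ gives exactly $a_1b_2-a_2b_1\equiv1$), so the map lands in the claimed target. Lemma~\ref{lemm:equivclassgmn} then says precisely that two matrices have the same image if and only if they lie in the same right coset of $\Gamma(N,MN)$: reading the statement forwards gives that the map is constant on cosets, and reading the converse gives injectivity. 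Thus everything except surjectivity is immediate from the lemmas.

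The heart of the argument is surjectivity, that is, lifting an arbitrary symbol $\beta=\begin{pmatrix}a_1&a_2\\b_1&b_2\end{pmatrix}\in\cS_{2,1}(\C)$ to an element of $\SL_2(\bZ)$ with the prescribed reductions. I would proceed in two stages. Fixing the second row $(b_1,b_2)\in(\bZ/MN)^2$, I would first look for lifts $\tilde a_1,\tilde a_2\in\bZ/MN$ of $a_1,a_2\in\bZ/N$ for which the matrix $\begin{pmatrix}\tilde a_1&\tilde a_2\\b_1&b_2\end{pmatrix}$ has determinant exactly $1$ in $\bZ/MN$, thereby upgrading the given congruence $a_1b_2-a_2b_1\equiv1\pmod N$ to one modulo $MN$. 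Writing $\tilde a_i=a_i+Nt_i$ reduces this to solving the single linear congruence $t_1b_2-t_2b_1\equiv -s\pmod M$, where $s$ is determined by $a_1b_2-a_2b_1=1+Ns$. Once such a lift is found, the resulting matrix lies in $\SL_2(\bZ/MN)$, and the classical surjectivity of $\SL_2(\bZ)\to\SL_2(\bZ/MN)$ produces a genuine element of $\SL_2(\bZ)$; by construction its second row reduces to $(b_1,b_2)$ modulo $MN$ and its first row reduces to $(a_1,a_2)$ modulo $N$, so its image is $\beta$.

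The one point needing care, and the main obstacle, is the solvability of $t_1b_2-t_2b_1\equiv -s\pmod M$. The image of $(t_1,t_2)\mapsto t_1b_2-t_2b_1$ in $\bZ/M$ is the cyclic subgroup generated by $\gcd(b_1,b_2)$, so the congruence is solvable for every right-hand side exactly when $\gcd(b_1,b_2,M)=1$. Here the generating condition $\bZ(a_1,b_1)+\bZ(a_2,b_2)=\C$ does the work: projecting $\C=C_N\times C_{MN}$ onto its second factor shows that $b_1,b_2$ generate $C_{MN}$, whence $\gcd(b_1,b_2,MN)=1$ and in particular $\gcd(b_1,b_2,M)=1$. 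This is where the hypothesis defining $\cS_{2,1}(\C)$ is used essentially, and it is what makes surjectivity go through; the remaining manipulations are routine.
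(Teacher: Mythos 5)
Your proof is correct and follows essentially the same route as the paper: well-definedness and injectivity come from Lemmas~\ref{lemm:mapgnrtcond} and~\ref{lemm:equivclassgmn}, and surjectivity is obtained by perturbing the first row by multiples of $N$ to force determinant $1$ modulo $MN$ (your congruence $t_1b_2-t_2b_1\equiv -s\pmod M$ is exactly the paper's $k_1d-k_2c\equiv -l_1\pmod M$) and then lifting along the classical surjection $\SL_2(\bZ)\to\SL_2(\bZ/MN)$. The only difference is cosmetic: you spell out why the congruence is solvable, deducing $\gcd(b_1,b_2,M)=1$ from the generating condition by projecting to the second factor, a step the paper asserts without elaboration.
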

\begin{proof}
Lemmas \ref{lemm:equivclassgmn} and \ref{lemm:mapgnrtcond} implies \eqref{eq:ltcsiso} is a well-defined injection. It suffices to show it is also surjective. Let 
 $$
 \beta=\begin{pmatrix}
a&b\\c&d
\end{pmatrix}\in\cS_{2,1}(\C).
$$ 
By definition, one has $ad-bc=1+l_1N$ for some $l_1$. The generating condition implies that $\gcd(c,d, M)=1$. So there exists $k_1, k_2\in C_M$ such that
$$
k_1d-k_2c=-l_1\pmod M.
$$
Put
$$
\gamma=\begin{pmatrix}
    a+k_1N&b+k_2N\\c&d
    \end{pmatrix},
    $$
One computes that
$
    \det(\gamma)\equiv1\pmod {MN}
$, i.e., $\gamma\in\SL_2(\bZ/MN)$.
 Let $\overline\gamma$ be a lift of $\gamma$ in $\SL_2(\bZ)$ under the surjection $\SL_2(\bZ)\to\SL_2(\bZ/MN).$ The lift $\overline\gamma$ is mapped to $\beta$ under the map \eqref{eq:ltcsiso}, proving surjectivity.
\end{proof}

\
\subsection*{Modular symbols}
We follow Manin's definition of modular symbols \cite[Section 1.7]{maninmodsymb}. Given the bijection \eqref{eq:ltcsiso}, the space $\bM_2(\Gamma(N,MN))$ of modular symbols of weight $2$ for $\Gamma(N,MN)$ is defined via generators 
$$
\begin{pmatrix}
    a&b\\
    c&d
\end{pmatrix}\in\cS_{2,1}(\C)
$$
subject to relations
\begin{enumerate}
    \item[(1)] $\begin{pmatrix} a&b\\c&d
\end{pmatrix}+\begin{pmatrix} b&-a\\d&-c
\end{pmatrix}=0,$

\item[(2)]
$\begin{pmatrix} a&b\\c&d
\end{pmatrix} +\begin{pmatrix} a+b&-a\\c+d&-c
\end{pmatrix}+\begin{pmatrix} b&-a-b\\d&-c-d
\end{pmatrix}=0,$

\item[(3)]
$\begin{pmatrix} a&b\\c&d
\end{pmatrix}=0\,\,$ if $\begin{pmatrix} a&b\\c&d
\end{pmatrix} =\begin{pmatrix} b&-a\\d&-c
\end{pmatrix}$ or $\begin{pmatrix} a+b&-a\\c+d&-c
\end{pmatrix}.$
\end{enumerate}
Relation (3) guarantees that the space of modular symbols is torsion-free. But for $\Gamma(N,MN)$,  relation (3) is redundant as the condition in (3) is never satisfied. Using relation $(1)$, relation $(2)$ can be rewritten:
\begin{align*}
    0&\stackrel{(2)}{=}\begin{pmatrix} b&-a\\d&-c
\end{pmatrix} +\begin{pmatrix} b-a&-b\\d-c&-d
\end{pmatrix}+\begin{pmatrix} -a&a-b\\-c&c-d
\end{pmatrix}\\
&\stackrel{(1)}{=}-\begin{pmatrix} a&b\\c&d
\end{pmatrix} +\begin{pmatrix} a-b&b\\c-d&d
\end{pmatrix}+\begin{pmatrix} a&b-a\\c&d-c
\end{pmatrix}.
\end{align*}
Equivalently, one can rewrite defining relations of $\bM_2(\Gamma(N,MN))$ as

\begin{enumerate}
    \item[\text{{\bf(R1)}}] $\begin{pmatrix} a&b\\c&d
\end{pmatrix} =-\begin{pmatrix} b&-a\\d&-c
\end{pmatrix},$
\item[\text{{\bf(R2)}}]
$\begin{pmatrix} a&b\\c&d
\end{pmatrix} =\begin{pmatrix} a-b&b\\c-d&d
\end{pmatrix}+\begin{pmatrix} a&b-a\\c&d-c
\end{pmatrix}.$\\
\end{enumerate}
\begin{prop}\label{prop:t1modiso}
The $\bZ$-modules $\cM_{2,1}^-(\C)$ and $\bM_2(\Gamma(N, MN))$ are isomorphic when $N\in\bZ_{>2}$ and $M\in\bZ_{\geq1}$.
\end{prop}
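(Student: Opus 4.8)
The plan is to exhibit the isomorphism as an explicit pair of mutually inverse $\bZ$-linear maps defined on generators, using the bijection \eqref{eq:ltcsiso} to identify the generating set $\cS_{2,1}(\C)$ of $\bM_2(\Gamma(N,MN))$ with the determinant-$1$ symbols of $\cM_{2,1}^-(\C)$. Reading the two columns of a matrix as a pair of characters, a Manin symbol $\begin{pmatrix} a&b\\ c&d\end{pmatrix}\in\cS_{2,1}(\C)$ corresponds to the symbol $\symb{(a,c),(b,d)}^-$, whose determinant is $ad-bc\equiv 1\pmod N$. I would define $\Phi\colon \bM_2(\Gamma(N,MN))\to\cM_{2,1}^-(\C)$ on generators by this rule and extend $\bZ$-linearly.

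First I would check that $\Phi$ is well defined, i.e.\ that it kills relations {\bf(R1)} and {\bf(R2)}. The image of {\bf(R2)} under $\Phi$ is literally an instance of {\bf(M)} (take $a_1=a$, $b_1=c$, $a_2=b$, $b_2=d$), so it vanishes. For {\bf(R1)} one computes $\Phi\begin{pmatrix} b&-a\\ d&-c\end{pmatrix}=\symb{(b,d),(-a,-c)}^-$, and applying {\bf(O)} followed by {\bf(A)} gives $\symb{(b,d),(-a,-c)}^-=\symb{(-a,-c),(b,d)}^-=-\symb{(a,c),(b,d)}^-$, so the two terms of {\bf(R1)} cancel. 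One also notes that every matrix occurring in {\bf(R1)} and {\bf(R2)} has determinant $1$, so each is a legitimate determinant-$1$ symbol.

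Next I would construct the candidate inverse $\Psi\colon\cM_{2,1}^-(\C)\to\bM_2(\Gamma(N,MN))$. Here the hypothesis $N>2$ is essential: it guarantees $1\not\equiv-1\pmod N$, so that $\cS_{2,1}(\C)$ and $\cS_{2,-1}(\C)$ are disjoint and every generator of $\cM_{2,1}^-(\C)$ is unambiguously of determinant $+1$ or $-1$. I set $\Psi\symb{(a_1,b_1),(a_2,b_2)}^-=\begin{pmatrix} a_1&a_2\\ b_1&b_2\end{pmatrix}$ on determinant-$1$ symbols, and $\Psi\symb{(a_1,b_1),(a_2,b_2)}^-=\begin{pmatrix} a_2&a_1\\ b_2&b_1\end{pmatrix}$ on determinant-$(-1)$ symbols (swapping columns restores determinant $1$, yielding a valid Manin generator). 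With this convention {\bf(O)} is sent to an identity of Manin symbols automatically; {\bf(A)} is sent to an instance of {\bf(R1)}; and {\bf(M)} is sent to an instance of {\bf(R2)} in both the determinant-$+1$ and the determinant-$(-1)$ case. Verifying the determinant-$(-1)$ case of {\bf(M)} — where one applies {\bf(R2)} to the column-swapped symbols and matches the three resulting terms — is the one computation requiring genuine attention to column ordering and signs; I expect this to be the main obstacle, though it remains elementary.

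Finally I would check that $\Phi$ and $\Psi$ are mutually inverse on generators: $\Psi\Phi=\mathrm{id}$ is immediate on $\cS_{2,1}(\C)$, while $\Phi\Psi=\mathrm{id}$ holds on determinant-$1$ symbols directly and on determinant-$(-1)$ symbols after a single application of {\bf(O)}. Since both maps respect all defining relations and compose to the identity in each direction, they are inverse isomorphisms of $\bZ$-modules, proving the statement. Throughout, I would rely on the redundancy of Manin's relation (3) for $\Gamma(N,MN)$, already observed above, which is precisely what permits working with the two-relation presentation {\bf(R1)}, {\bf(R2)} of $\bM_2(\Gamma(N,MN))$.
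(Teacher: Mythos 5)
Your proposal is correct and follows essentially the same route as the paper: the paper's map \eqref{eq: t1modsymbmap} is exactly your $\Psi$ (same determinant case split, same use of $N>2$ to distinguish $\pm1$ modulo $N$), its stated inverse is your $\Phi$, and the relation matching is identical ({\bf(M)} with {\bf(R2)}, and {\bf(A)} together with {\bf(O)} with {\bf(R1)}). The only difference is presentational — you take $\Phi$ as primary and verify the determinant-$(-1)$ case of {\bf(M)} explicitly, a detail the paper leaves to the reader.
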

\begin{proof}
When $N>2$, consider the map
\begin{align}\label{eq: t1modsymbmap} 
\cM_{2,1}^-(\C)\to\bM_2(\Gamma(N, MN)),
\end{align}
$$
\symb{(a_1,b_1),(a_2,b_2)}^-\mapsto
\begin{cases}
    \begin{pmatrix}
a_1& a_2\\b_1&b_2
\end{pmatrix}&\text{if } a_1b_2-a_2b_1=1\pmod N,\\[0.5cm]
\begin{pmatrix}
a_2& a_1\\b_2&b_1
\end{pmatrix}&\text{if } a_1b_2-a_2b_1=-1\pmod N.
\end{cases}
$$
The correctness of the map $\eqref{eq: t1modsymbmap}$ can be verified directly:
\begin{itemize}
    \item It is compatible with the relation {\bf(O)} by construction.
    \item Relation {\bf(M)} is identical to relation {\bf (R2)} and preserves the determinants of the symbols.
     \item It is compatible with relation {\bf(A)} due to the defining relation {\bf (R1)} of $\bM_2(\Gamma(N, MN))$.
\end{itemize}
Similarly, one can check that the map given by 
$$
    \bM_2(\Gamma(N, MN))\to\cM_{2,1}^-(\C),\quad \begin{pmatrix}
a&b\\c&d
\end{pmatrix}\mapsto\symb{(a,c),(b,d)}^-
$$
is a well-defined inverse homomorphism to \eqref{eq: t1modsymbmap}.
\end{proof}

When $N=2$, the map $\eqref{eq: t1modsymbmap}$ in the proof above is not well-defined as $\pm1$ are not distinguishable modulo 2. But in this case, the generating sets of $\cM_2^-(C_2\times C_{2M})$ and $\bM_2(\Gamma(2,2M))$ coincide: 
$
\cS_2(C_2\times C_{2M})$ is simply the free $\bZ$-module generated by elements in $\cS_{2,1}(C_2\times C_{2M}).
$ 
We can then consider the $\bZ$-module
$$
\bM^-_2(\Gamma(2,2M))
$$
defined as the quotient of $\cS_2(C_2\times C_{2M})$ by relations {\bf(R1)} and {\bf(R2)}, i.e., the quotient of $\bM_2(\Gamma(2,2M))$ by
$$
{\bf (O)}:
\begin{pmatrix}
a&b\\c&d
\end{pmatrix}=\begin{pmatrix}
b&a\\d&c
\end{pmatrix}.
$$
\begin{prop}\label{prop:n=2case}
The $\bZ$-modules $\cM_2^-(C_2\times C_{2M})$ and $\bM_2^-(\Gamma(2,2M))$ are isomorphic  for all integers $M\in\bZ_{\geq1}$.
\end{prop}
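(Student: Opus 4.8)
The plan is to realize both $\cM_2^-(C_2\times C_{2M})$ and $\bM_2^-(\Gamma(2,2M))$ as quotients of one and the same free $\bZ$-module, and then to show that their two relation submodules coincide, so that the identity on generators descends to the desired isomorphism. The starting point is the observation already recorded above: since $(\bZ/2)^\times=\{1\}$, every generating symbol of $\cM_2^-(C_2\times C_{2M})$ has determinant $\equiv 1\pmod 2$, so $\cS_2(C_2\times C_{2M})$ is the free $\bZ$-module on $\cS_{2,1}(C_2\times C_{2M})$. Both modules in question are quotients of this free module: $\cM_2^-$ by the span of {\bf(O)}, {\bf(M)}, {\bf(A)}, and $\bM_2^-$ by the span of {\bf(R1)}, {\bf(R2)}, {\bf(O)}. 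I therefore take as candidate the identity map on generators,
$$
\symb{(a_1,b_1),(a_2,b_2)}^-\mapsto\begin{pmatrix}a_1&a_2\\b_1&b_2\end{pmatrix},
$$
and reduce the whole statement to comparing the two relation submodules inside the free module.

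First I would record the two immediate matches. Relation {\bf(M)} and relation {\bf(R2)} are literally the same element of the free module under the identification $(a,b,c,d)=(a_1,a_2,b_1,b_2)$, exactly as in the proof of Proposition~\ref{prop:t1modiso}; and relation {\bf(O)} is by construction among the defining relations of both modules. Hence the only thing left is to compare the anti-symmetry relation {\bf(A)} against {\bf(R1)}.

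The key step I would carry out is to show that, modulo the {\bf(O)}-relations, {\bf(A)} and {\bf(R1)} generate the same subgroup. Writing the {\bf(A)}-generator, the {\bf(R1)}-generator, and the {\bf(O)}-generator applied to $\left(\begin{smallmatrix}a_2&-a_1\\b_2&-b_1\end{smallmatrix}\right)$ respectively as
$$
g_A=\begin{pmatrix}a_1&a_2\\b_1&b_2\end{pmatrix}+\begin{pmatrix}-a_1&a_2\\-b_1&b_2\end{pmatrix},\quad
g_{R1}=\begin{pmatrix}a_1&a_2\\b_1&b_2\end{pmatrix}+\begin{pmatrix}a_2&-a_1\\b_2&-b_1\end{pmatrix},\quad
g_O=\begin{pmatrix}a_2&-a_1\\b_2&-b_1\end{pmatrix}-\begin{pmatrix}-a_1&a_2\\-b_1&b_2\end{pmatrix},
$$
a direct cancellation gives $g_A=g_{R1}-g_O$, so the submodule generated by the {\bf(O)}- and {\bf(A)}-relations equals the one generated by the {\bf(O)}- and {\bf(R1)}-relations. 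Along the way one checks that the two matrices on the right lie in $\cS_{2,1}$: scaling a column by the unit $-1$ changes neither the generation condition nor the determinant mod $2$. Combined with {\bf(M)}$=${\bf(R2)}, this shows the full relation submodules agree, so the identity map descends to a well-defined isomorphism.

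The one genuine subtlety — and the reason this case must be separated from Proposition~\ref{prop:t1modiso} — is that the orientation map of that proposition reads off $\det\equiv\pm1\pmod N$ to orient each symbol, which is impossible for $N=2$. The point of passing to $\bM_2^-$ is precisely that quotienting the modular symbols by {\bf(O)} supplies exactly the identification of the two orientations that the determinant can no longer detect; the identity $g_A=g_{R1}-g_O$ is the precise form of this statement, and verifying it is the only real content of the argument.
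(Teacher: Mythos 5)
Your proposal is correct and follows essentially the same route as the paper: both modules are realized as quotients of the free module $\cS_2(C_2\times C_{2M})$ on $\cS_{2,1}(C_2\times C_{2M})$, with \textbf{(M)} matching \textbf{(R2)} verbatim and \textbf{(A)} matching \textbf{(R1)} modulo \textbf{(O)}. Your identity $g_A=g_{R1}-g_O$ is simply the explicit form of the paper's one-line observation that ``with the presence of \textbf{(O)}, the relation \textbf{(R1)} is identical to \textbf{(A)},'' so you have just spelled out the same argument in more detail.
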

\begin{proof}
With the presence of {\bf (O)}, the relation {\bf(R1)} is identical to {\bf (A)}. It follows that relations {\bf(R1)} and {\bf (R2)} generate the same submodule of $\cS_2(C_2\times C_{2M})$ as {\bf (M)} and {\bf (A)} does.
\end{proof}

It is classically known that $\bM_2(\Gamma(N, MN))$ can be identified as 
$$
H_1(\overline{X(N,MN)},\bZ),
$$
the first homology group of the complex modular curve $X(N, MN)$ compactified with respect to the cusps \cite[Theorem 1.9]{maninmodsymb}. We follow definitions in \cite[Chapter 1.3]{shimuraautomorphic}:  
\begin{itemize}
    \item $X(N, MN):=\Gamma(N, MN)\backslash \mh,$ where $\mh$ is the upper half-plane,
     \item $\bP^1(\bQ):=\bQ\cup\{\infty\}$, cusps are the elements of $\bP^1(\bQ)/\Gamma(N, MN)$,
     \item $\mh^*:=\mh\cup\bP^1(\bQ)$ is the extended upper half-plane,
        \item $\overline{X(N, MN)}:=\Gamma(N, MN)\backslash 
    \mh^*$.
   
\end{itemize}
In particular, a symbol $
\begin{pmatrix}
    a&b\\
    c&d
\end{pmatrix}$
corresponds to the image in $X(N,MN)$ of the geodesic path from $a/c$ to $b/d$, where $a,b,c$ and $d$ are naturally considered as integers. Moreover, $\bM^-_2(\Gamma(2,2M))$ can be identified as the $(-1)$-eigenspace of the antiholomorphic involution on $X(2, 2M)$ given by the map $\tau\mapsto-\bar\tau, \tau\in\cH$, on the universal cover. On modular symbols, $\iota$ takes the form 
$$
\iota:\begin{pmatrix}
a&b\\c&d
\end{pmatrix}\mapsto\begin{pmatrix}
a&-b\\-c&d
\end{pmatrix}\stackrel{\text{{\bf(R1)}}}{=}-\begin{pmatrix}
-b&-a\\d&c
\end{pmatrix}\stackrel{\mod2}{=}-\begin{pmatrix}
b&a\\d&c
\end{pmatrix}.
$$
This forces a $2$-torsion in $\bM_2^-(\Gamma(2,2M))$ each time a cusp different from $\infty$ is fixed by $\iota$.

Concretely, these imply that
\begin{align}\label{eqn:dimform}
\dim(\bM_2(\Gamma(N, MN)_\bQ)=2g(N, MN)+\varepsilon_\infty(N, MN)-1,\\
\dim(\bM_2^-(\Gamma(2, 2M)_\bQ)=g(2, 2M)+\frac{\varepsilon_\infty(2, 2M)-\varepsilon(2, 2M)}{2},\nonumber\\
\Tor(\bM_2(\Gamma(N, MN))=0,\,\,
\Tor(\bM_2^-(\Gamma(2, 2M)))=(\bZ/2)^{\varepsilon(2,2M)-1},\nonumber
\end{align}
where 
\begin{itemize}
    \item $g(N, MN)$ is the genus of $\overline{X(N, MN)}$ as a compact Riemann surface,
    
    \item$\varepsilon_\infty(N, MN)$ is the number of cusps, i.e., the cardinality of $\bP^1(\bQ)/\Gamma(N, MN)$.
    
    \item$\varepsilon(2, 2M)$ is the number of cusps fixed by the anti-holomorphic involution on $X(2, 2M)$.
    \item $\Tor$ refers to the torsion subgroup.
\end{itemize}
We compute each term appearing in \eqref{eqn:dimform}. It is well-known that 
$$
|\bP^1(\bQ)/\Gamma(MN)|=\frac{M^2N^2}{2}\cdot\displaystyle{\prod_{\substack{p\vert MN}}(1-p^{-2})}.
$$
Recall from  Lemma \ref{lemm:quogmM} that $[\Gamma(N, MN):\Gamma(MN)]=M$. Then 
$$
\varepsilon_\infty(N,MN)=\frac{MN^2}{2}\cdot\displaystyle{\prod_{\substack{p\vert MN}}(1-p^{-2})}.
$$
Using the genus formula of modular curves \cite[Theorem 3.1.1]{diamondshurman}, we obtain for $N\geq3$ and $M\geq1$:
$$
g(N, MN)=1+\frac{MN^2(MN-6)}{24}\cdot\displaystyle{\prod_{\substack{p\mid MN}}(1-p^{-2})}.
$$
To compute $\varepsilon(2,2M)$, first observe that
$$
\Gamma(2,2M)=\bigcup_{j\in\bZ/M}\Gamma(2M)\cdot\begin{pmatrix}
    1&2j\\
    0&1
\end{pmatrix}.
$$
Two reduced rational numbers $a/c$ and $a'/c'$ lie in the same equivalence class of cusps in $\bP^1(\bQ)/\Gamma(2,2M)$ if and only if  
$$
\frac ac\equiv\frac{a'}{c'}+2j\pmod{\Gamma(2M)} \quad\text{for some } j\in\bZ/M,
$$
if and only if \cite[Proposition 3.8.3]{diamondshurman}
$$
(a', c')\equiv\pm(a+2jc,c) \pmod{2M},\quad\text{for some}\,\, j\in\bZ/M.
$$
A counting argument leads to 
$$
    \varepsilon(2,2M)=2\phi(M)+\phi(2M),\quad M>2.
$$
We summarize the computations above and results in \cite[Section 11]{KPT}:

\begin{prop}\label{prop:mainformminus} Let $G$ be a finite abelian group. Then 
\begin{itemize}
    \item When $G=C_N$, $N\geq 5$ and $N$ is even, 
    $$
    \dim(\cM_2^-(G)_\bQ)=1-\frac{\phi(N)+\phi(N/2)}{2}+\frac{N\cdot\phi(N)}{24}\displaystyle\cdot{\prod_{p\mid N}}(1+\frac1p),
    $$
    $$
    \Tor(\cM_2^-(G))=(\bZ/2)^{\phi(N)+\phi(N/2)-1}.
    $$
    \item When $G=C_N$, $N\geq 5$ and $N$ is odd,
    $$
    \dim(\cM_2^-(G)_\bQ)=1-\frac{\phi(N)}{2}+\frac{N\cdot\phi(N)}{24}\cdot\displaystyle{\prod_{p\mid N}}(1+\frac1p),
    $$
    $$
    \Tor(\cM_2^-(G))=(\bZ/2)^{\phi(N)-1}.
    $$
\item When $G=C_2\times C_{2M}$, $M\geq3$,
$$\dim(\cM^-_2(G)_\bQ)=1-\phi(M)-\frac{\phi(2M)}{2}+\frac{M^2}{3}\cdot\displaystyle \prod_{p\vert MN} (1-p^{-2}),
$$
$$
\Tor(\cM^-_2(G))=(\bZ/2)^{2\phi(M)+\phi(2M)-1}.
$$
\item When $G=\C$, $N\geq 3$, $M\geq 1$,
$$
\dim(\cM_2^-(G)_\bQ)=\frac{\phi(N)}{2}\left(1+\frac{M^2N^3}{12}\cdot\displaystyle \prod_{p\vert MN} (1-p^{-2})\right),
$$
$$
\Tor(\cM^-_2(G))=0.
$$ 
\item 
$
\cM^-_2(C_2)=\cM^-_2(C_3)=\bZ/2, \quad\cM^-_2(C_4)=\cM^-_2(C_2^2)=(\bZ/2)^2.$
\item $\cM_2^-(G)=0$ if $G$ is not in any of the cases above.
\end{itemize} 
\end{prop}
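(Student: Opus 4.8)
The plan is to prove the proposition by a case analysis that assembles the isomorphisms and curve-theoretic computations established above. First I would record the reduction that makes the list exhaustive: every generating symbol $\symb{b_1,b_2}^-$ forces $\bZ b_1+\bZ b_2=G^\vee$, so $\cM_2^-(G)=0$ unless $G^\vee$, equivalently $G$, is generated by at most two elements. Thus only the trivial, cyclic, and bi-cyclic groups can contribute, and every bi-cyclic group has the form $\C$ with $N\geq 2$ and $M\geq 1$. The cyclic dimension formulae for $N\geq 5$ are exactly the statement proved in \cite{KPT} via $\Gamma_1(N)$, so the genuinely new work is the bi-cyclic case, which splits according to whether $N\geq 3$ or $N=2$.

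For $G=\C$ with $N\geq 3$, I would combine the splitting $\cM_2^-(G)\simeq\bigoplus_{\phi(N)/2}\cM^-_{2,1}(G)$ with the isomorphism $\cM^-_{2,1}(\C)\simeq\bM_2(\Gamma(N,MN))$ of Proposition \ref{prop:t1modiso}. Since $\bM_2(\Gamma(N,MN))\simeq H_1(\overline{X(N,MN)},\bZ)$ is the homology of a compact Riemann surface, it is torsion-free, giving $\Tor(\cM_2^-(G))=0$. For the rank I would substitute the already-computed values of $g(N,MN)$ and $\varepsilon_\infty(N,MN)$ into $\dim\bM_2(\Gamma(N,MN))_\bQ=2g(N,MN)+\varepsilon_\infty(N,MN)-1$ and simplify: the factor $MN-6$ coming from the genus and the $+6$ hidden in $\varepsilon_\infty$ cancel, leaving $1+\tfrac{M^2N^3}{12}\prod_{p\mid MN}(1-p^{-2})$; multiplying by the $\tfrac{\phi(N)}{2}$ copies yields the stated formula. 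This is a routine arithmetic simplification once the two curve invariants are in hand.

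The case $G=C_2\times C_{2M}$ with $M\geq 3$ is where the argument changes character, and I expect it to be the main obstacle. Here $\phi(N)/2=\tfrac12$ is not an integer and Proposition \ref{prop:t1modiso} does not apply, so both the clean splitting into copies of $\cM_{2,1}^-$ and the resulting torsion-freeness break down. Instead I would use Proposition \ref{prop:n=2case} to identify $\cM_2^-(C_2\times C_{2M})$ with the minus-eigenspace $\bM_2^-(\Gamma(2,2M))$ of the anti-holomorphic involution $\iota$, and then read off both the rank and the torsion from \eqref{eqn:dimform}, namely $\dim=g(2,2M)+\tfrac{\varepsilon_\infty(2,2M)-\varepsilon(2,2M)}{2}$ and $\Tor=(\bZ/2)^{\varepsilon(2,2M)-1}$. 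The delicate input is the count $\varepsilon(2,2M)=2\phi(M)+\phi(2M)$ of cusps fixed by $\iota$, together with the observation that each such fixed cusp other than $\infty$ contributes a $2$-torsion class; substituting $g(2,2M)$, $\varepsilon_\infty(2,2M)$ and this $\varepsilon(2,2M)$ then gives $1-\phi(M)-\tfrac{\phi(2M)}{2}+\tfrac{M^2}{3}\prod_{p\mid 2M}(1-p^{-2})$ after the same $-3$/$+3$ cancellation as before.

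It remains to dispose of the finitely many boundary groups, which I would handle by direct computation with the generators and relations (equivalently, with the small modular-symbol spaces): $C_2,C_3,C_4,C_2^2$ give the explicit $2$-torsion groups listed, and the off-range bi-cyclic group $C_2\times C_4$, the excluded value $M=2$ for $N=2$ where the formula $\varepsilon(2,2M)=2\phi(M)+\phi(2M)$ is not yet valid, must be checked separately to vanish \emph{integrally} rather than merely rationally. Finally, for the cyclic torsion statement, which goes beyond the rational dimension of \cite{KPT}, I would run the analogous minus-space analysis for $\Gamma_1(N)$, counting its cusps fixed by $\iota$ to obtain $\phi(N)+\phi(N/2)$ for even $N$ and $\phi(N)$ for odd $N$, so that $\Tor(\cM_2^-(C_N))=(\bZ/2)^{\phi(N)+\phi(N/2)-1}$ respectively $(\bZ/2)^{\phi(N)-1}$. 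Together with the rank-$\geq 3$ vanishing noted at the outset, this exhausts all $G$.
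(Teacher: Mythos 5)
Your proposal is correct and follows essentially the same route as the paper, whose proof of Proposition \ref{prop:mainformminus} is precisely the assembly you describe: the determinant splitting into $\phi(N)/2$ copies of $\cM^-_{2,1}$, Propositions \ref{prop:t1modiso} and \ref{prop:n=2case}, the identification with $H_1(\overline{X(N,MN)},\bZ)$ together with the genus, cusp, and $\varepsilon(2,2M)$ computations in \eqref{eqn:dimform}, and the cyclic results of \cite[Section 11]{KPT}. Your explicit attention to the rank-two reduction and to the boundary cases ($C_2$, $C_3$, $C_4$, $C_2^2$, and the integral vanishing for $C_2\times C_4$) spells out checks the paper leaves implicit, but the argument is the same.
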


\section{Dimensional Formulae}\label{sect:formulae}
Consider the natural quotient map of $\cM_2(G)$ by relation {\bf(A)}
$$
\mu^-:\cM_2(G)\to\cM_2^-(G).
$$
In this section, we determine the $\bQ$-rank of the kernel of $\mu^-$. First, we introduce an auxiliary group
$$
\cM^+_1(G)
$$
defined as the quotient of $\mathcal{M}_1(G)=\mathcal{S}_1(G)$ by the relation
$$
{\bf(P)}: \langle a_1 \rangle=\langle -a_1\rangle,
$$
and denote by $\symb{a_1}^+\in\mathcal{M}_1^+(G)$ the image of $\symb{a}\in\mathcal{M}_1(G)$ under the natural projection 
$$
\mu^+ : \mathcal{M}_1(G)\to\mathcal{M}_1^+(G).
$$
We have
$$
\mathcal{M}_1^+(G)=
\begin{cases}
\bZ^{\frac{\phi(N)}{2}}&G=C_N, N>2,\\ 
\bZ&G=C_N, N=1,2,\\
0&otherwise.
\end{cases}
$$
Given a finite abelian group $G$ and a subgroup $G'\subsetneq G$ such that $G'=C_d$ for some $d\in\bZ_{\geq1}$, there is a map
\begin{align}
\label{eq:maincomult}& \nu_{G'}: \mathcal{M}_n(G)\ra \mathcal{M}_1^+(G')\otimes\mathcal{M}_{n-1}^-(G''),
\end{align}
obatined as the composition of the co-multiplication map and $\mu^+$. Notice that $\nu_{G'}$ is non-trivial only when $G'$ is cyclic. Put
$$
\nu:=\bigoplus_{G'\subsetneq G}\nu_{G'}, 
$$
where the sum runs through all proper cyclic subgroups (including the trivial one) $G'\subsetneq G$. We will show that the restriction of $\nu$ to  
$$
\cK_n(G):=\ker\left(\mathcal{M}_n(G)\to\mathcal{M}_n^-(G)\right)
$$
is an isomorphism over $\bQ$. Formally, consider the map
\begin{align}\label{eq:nu}
\nu_{\cK_n(G)}: \cK_n(G)\to \bigoplus_{\substack{G'\subsetneq G}}\mathcal{M}_1^+(G')\otimes\mathcal{M}_{n-1}^-(G/G').
\end{align}
We construct an inverse of $\nu_{\cK_n(G)}$ over $\bQ$:
\begin{align}\label{eq:psi}
\psi: \bigoplus_{\substack{G'\subsetneq G}}\mathcal{M}_1^+(G')\otimes\mathcal{M}_{n-1}^-(G/G')\to\cK_n(G)\end{align}
in the following way:

 \
 
Let $G'=C_{d}\subsetneq G$ be a cyclic subgroup of $G$. We denote by
$$
A, A',\text{and } A''
$$ 
the character group of 
$$
G, G',\text{and } G/G'
$$ 
respectively. For any 
$$
\symb{a}^+\in\mathcal{M}_1^+(C_{d_i})
$$ 
and  
$$
\symb{b_1, b_2, \ldots, b_{n-1}}^-\in\mathcal{M}_{n-1}^-(G/G'),
$$ 
we set
$$
\boldsymbol b:=\{b_1, b_2, \ldots, b_{n-1}\},
$$
and
$$
\boldsymbol\omega(a,\boldsymbol b):=\symb{a}^+\otimes\symb{b_1,\ldots, b_{n-1}}^-\in\mathcal{M}_1^+(G')\otimes\mathcal{M}_{n-1}^-(G/G').
$$
Find an arbitrary lift $a'\in A$ of $a\in A'$ and put 
$$
\boldsymbol\gamma(a,\boldsymbol b):=\symb{a', b_1,\ldots, b_{n-1}}+\symb{-a', b_1,\ldots, b_{n-1}}\in\cK_n(G),
$$ 
where $b_i$ are understood via the embedding $A''\subset A$.
Then we define
\begin{align}\label{eq:psidef}\psi(\boldsymbol\omega(a, \boldsymbol b)):=\frac12\,\boldsymbol\gamma(a,\boldsymbol  b).\end{align}
Notice that $\psi$ is defined over $\bQ$. It is not hard to see that 
\begin{align}
\label{eq:nudprop}
\nu_{G'}(\,\frac12\,\boldsymbol\gamma(a, \boldsymbol b)\,)=\boldsymbol\omega(a, \boldsymbol b)
\end{align}
and the map $\psi$ is compatible with relations {\bf(O)} and {\bf(M)}. It remains to check that the construction is independent of the lift $a'$ and $\psi$ is also compatible with relations {\bf(P)} and {\bf(A)} as a homomorphism between $\bQ$-vector spaces.
\begin{lemm}\label{lemm:psiwelldef}
With the notation above, the definition of $\psi$ is independent of the choice of the lift $a'$ of $a$.
\end{lemm}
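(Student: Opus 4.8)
The plan is to reduce the statement to a single elementary move. Two lifts $a'$ of $a\in A'$ differ exactly by an element of $A''=\ker(A\to A')$, and since the symbol $\symb{b_1,\ldots,b_{n-1}}^-\in\cM_{n-1}^-(G/G')$ satisfies the generating condition, the classes $b_1,\ldots,b_{n-1}$ span $A''$. Hence any two lifts satisfy $a''=a'+c$ with $c=\sum_j m_j b_j$ for integers $m_j$, where the $b_j$ are viewed inside $A$ via $A''\subset A$. Using the reordering relation \textbf{(O)} to move any chosen $b_j$ into the second slot, it suffices to show that $\boldsymbol\gamma(a,\boldsymbol b)$ is unchanged when the lift $a'$ is replaced by $a'+b_1$, \emph{for every} lift; the general case then follows by iterating this elementary shift (and, reading the identity backwards, also $a'\mapsto a'-b_1$), with the required multiplicities $m_j$, over each index $j$.

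For the elementary shift I would apply the motivic blowup relation \textbf{(M)} twice to the first two coordinates, treating $b_2,\ldots,b_{n-1}$ as a fixed tail:
\begin{align*}
\symb{a'+b_1,b_1,\ldots,b_{n-1}}&=\symb{a',b_1,\ldots,b_{n-1}}+\symb{a'+b_1,-a',b_2,\ldots,b_{n-1}},\\
\symb{-a',b_1,\ldots,b_{n-1}}&=\symb{-a'-b_1,b_1,\ldots,b_{n-1}}+\symb{-a',a'+b_1,b_2,\ldots,b_{n-1}}.
\end{align*}
Subtracting the value of $\boldsymbol\gamma$ at the lift $a'$ from its value at the lift $a'+b_1$, the two ``straight'' symbols cancel against the first terms on the right, leaving
$$
\symb{a'+b_1,-a',b_2,\ldots,b_{n-1}}-\symb{-a',a'+b_1,b_2,\ldots,b_{n-1}},
$$
which vanishes by the transposition relation \textbf{(O)}. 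Thus $\boldsymbol\gamma(a,\boldsymbol b)$ is invariant under $a'\mapsto a'+b_1$, and by the reduction above under $a'\mapsto a'+c$ for every $c\in A''$. I would emphasize that this argument uses only \textbf{(O)} and \textbf{(M)}, so the independence already holds in $\cM_n(G)$ integrally, not merely after tensoring with $\bQ$.

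The only points requiring attention are bookkeeping rather than genuine obstacles. First, I must confirm that every symbol produced is a legitimate generator of $\cS_n(G)$, i.e.\ that its entries still generate $A=G^\vee$; this holds because $a'$ together with the $b_i$ generate $A$ (as $a$ generates $A'$ and the $b_i$ generate $A''$), and each application of \textbf{(M)} and \textbf{(O)} preserves the $\bZ$-span of the entries. Second, since every term of $\boldsymbol\gamma$ lies in $\cK_n(G)\subset\cM_n(G)$ by construction, an equality established in $\cM_n(G)$ is automatically an equality in $\cK_n(G)$. The one genuinely content-bearing step is the choice of the two \textbf{(M)}-expansions above, arranged so that the leftover terms are exchanged by a single transposition in $\fS_n$; everything else is routine verification.
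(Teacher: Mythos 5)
Your proof is correct and takes essentially the same route as the paper's: both reduce to the invariance of the pair $\symb{a',b_1,\ldots,b_{n-1}}+\symb{-a',b_1,\ldots,b_{n-1}}$ under the elementary shift $a'\mapsto a'\pm b_j$, established by two applications of \textbf{(M)} whose leftover terms cancel by \textbf{(O)} (the paper's displayed identity with $a_1=a'+b_1$ is literally your computation), followed by iteration using that the $b_i$ span $A''$. Your added remark that the argument uses only \textbf{(O)} and \textbf{(M)} and hence holds integrally in $\cM_n(G)$ is consistent with the paper's proof as well.
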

\begin{proof}
    Let $a_1, a_2\in A$ be two lifts of $a\in A'$, i.e., there exists $g\in A''$ such that $a_2=a_1+g.$ Relations {\bf(S)} and {\bf(M)} imply that
\begin{align*}
\symb{a_1,b_1,\ldots}=\symb{a_1-b_1, b_1,\ldots}+\symb{a_1, b_1-a_1,\ldots},\\
\symb{b_1-a_1, b_1,\ldots}=\symb{-a_1, b_1,\ldots}+\symb{a_1, b_1-a_1,\ldots}.
\end{align*}
Taking the difference between the two lines above, one has 
$$
\symb{a_1,b_1,\ldots}+\symb{-a_1, b_1,\ldots}=\symb{a_1-b_1, b_1,\ldots}+\symb{b_1-a_1, b_1,\ldots}.
$$
Iterating this process with $b_i$, we obtain
$$
\symb{a_1,b_1,\ldots}+\symb{-a_1, b_1,\ldots}=\symb{a_1-\sum_{i=1}^{n-1}m_ib_i, b_1,\ldots}+\symb{\sum_{i=1}^{n-1}m_ib_i-a_1, b_1,\ldots}
$$
where $m_i\in\bZ_{\geq 0}$ for all $i$. Since $b_i$ generate $A''$, we conclude that
$$
\symb{a_1,b_1,\ldots}+\symb{-a_1, b_1,\ldots}=\symb{a_2, b_1,\ldots}+\symb{-a_2, b_1,\ldots}.
$$
\end{proof}
\noindent Notice that Lemma~\ref{lemm:psiwelldef} also implies that $\psi$ is compatible with the relation {\bf(P)}. Indeed, let $a'$ be a lift of $a\in A'$ in $A$ and $a''$ a lift of $-a\in A'$ in $A$. Then $a''=-a'+g$ for some $g\in A''$ and thus $\boldsymbol\gamma(a,\boldsymbol b)=\boldsymbol\gamma(-a,\boldsymbol b)$. The compatibility of $\psi$ with the relation {\bf(A)} is reduced to the following lemma.

\begin{lemm}\label{lemm:orbstab}
Let  $n\geq2$ be an integer, $G$ be a finite abelian group and $\symb{a_1,\ldots,a_n}$ be any generating symbol of $\cM_n(G)$, one has $$\sum_{\varepsilon_1,\varepsilon_2=\pm1}\symb{\varepsilon_1a_1,\varepsilon_2a_2, a_3,\ldots,a_n}=0\in\cM_n(G)\otimes \bQ.$$
\end{lemm}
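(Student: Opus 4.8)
The plan is to read the four-term sum as a fully sign-symmetrized symbol and to exploit that, once the signs are summed, the blowup relation {\bf(M)} becomes a genuine invariance under elementary shears. Write $S=\Phi(a_1,a_2)$, where for a generating pair $(x,y)$ (the spectator coordinates $a_3,\dots,a_n$ being fixed throughout) I put $\Phi(x,y):=\sum_{\varepsilon_1,\varepsilon_2=\pm1}\symb{\varepsilon_1x,\varepsilon_2y,a_3,\dots,a_n}$. The first step would be to upgrade Lemma~\ref{lemm:psiwelldef}: that lemma shows the one-sided symmetrization $F^{(c)}(t):=\symb{t,c,a_3,\dots,a_n}+\symb{-t,c,a_3,\dots,a_n}$ is unchanged when $t$ is replaced by $t+g$ for any $g$ in the subgroup generated by the remaining coordinates $c,a_3,\dots,a_n$. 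Since $\Phi(x,y)=F^{(y)}(x)+F^{(-y)}(x)$, and since both $\langle y,a_3,\dots,a_n\rangle$ and $\langle -y,a_3,\dots,a_n\rangle$ are the same subgroup and contain $y$, each summand is invariant under $x\mapsto x+y$; hence $\Phi(x+y,y)=\Phi(x,y)$. By {\bf(O)} the same holds in the second slot, and $\Phi$ is manifestly invariant under the sign changes $(x,y)\mapsto(\pm x,\pm y)$ and the swap $(x,y)\mapsto(y,x)$. Thus $\Phi$ is constant on the orbits of the group generated by the shears $(x,y)\mapsto(x\pm y,y)$ and $(x,y)\mapsto(x,y\pm x)$, the signs, and the swap, acting on the finite set $\Omega$ of pairs with $\langle x,y,a_3,\dots,a_n\rangle=G^\vee$.

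The second step would be an averaging argument on a single such orbit $O\subseteq\Omega$. Set $U_O:=\sum_{(x,y)\in O}\symb{x,y,a_3,\dots,a_n}$. Applying {\bf(M)} termwise gives $U_O=\sum_{(x,y)\in O}\symb{x-y,y,a_3,\dots,a_n}+\sum_{(x,y)\in O}\symb{x,y-x,a_3,\dots,a_n}$; because the shears $(x,y)\mapsto(x-y,y)$ and $(x,y)\mapsto(x,y-x)$ are bijections of $O$, each sum on the right is again $U_O$, so $U_O=2U_O$ and therefore $U_O=0$. Summing $\Phi$ over $O$ and using that each sign change permutes $O$, I would obtain $\sum_{(x,y)\in O}\Phi(x,y)=4U_O=0$. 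Finally, by Step~1 the function $\Phi$ is constant on $O$, so this sum equals $|O|\cdot\Phi(a_1,a_2)$ for any fixed $(a_1,a_2)\in O$; as $|O|$ is a positive integer, $\Phi(a_1,a_2)=0$ in $\cM_n(G)\otimes\bQ$, which is the assertion.

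I expect the main obstacle to be Step~1, namely verifying that the sign-averaged symbol is genuinely shear-invariant: Lemma~\ref{lemm:psiwelldef} is a statement about a single coordinate, and the crux is that the two branches $F^{(y)}$ and $F^{(-y)}$ may be shifted by the \emph{same} element $y$; this is exactly why one must pass from $\cM_n(G)$ to the symmetrized $\Phi$, since no individual generating symbol is shear-invariant. A secondary subtlety, and the reason for organizing the argument orbit-by-orbit rather than reducing to a single normal form, is that the generating pairs need not form one orbit: when $G^\vee$ is neither cyclic nor elementary abelian the shear-sign-swap action on $\Omega$ may have several orbits. The averaging nonetheless annihilates $\Phi$ on each orbit separately, so transitivity of the action is never required.
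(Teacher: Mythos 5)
Your proof is correct and follows essentially the same route as the paper's: both establish that the sign-symmetrized sum is invariant under the shear/sign/swap action of (a group containing) $\SL_2(\bZ)$ on the first two coordinates, then annihilate it by averaging over the finite orbit, where termwise application of \textbf{(M)} and bijectivity of the shears force the orbit sum to equal twice itself and hence vanish, so that constancy on the orbit gives the vanishing rationally. The only substantive deviation is that you derive shear-invariance of $\Phi$ by reusing the translation identity from the proof of Lemma~\ref{lemm:psiwelldef}, whereas the paper checks invariance under the generator $\begin{pmatrix}1&1\\0&1\end{pmatrix}$ by a direct two-step computation with \textbf{(M)}; your reuse is legitimate, since that identity only uses \textbf{(M)} together with the spanning condition, which the shears preserve.
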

\begin{proof}
For simplicity, we denote the sum in the assertion by
$$
\delta(\symb{a_1,\ldots,a_n}):=\sum_{\varepsilon_1,\varepsilon_2=\pm1}\symb{\varepsilon_1a_1,\varepsilon_2a_2, a_3,\ldots,a_n}.
$$ 
Consider a group action of $\SL_2(\bZ)$ on $\delta(\symb{a_1,\ldots,a_n})$ via
$$
\begin{pmatrix}a&b\\c&d\end{pmatrix}\cdot\delta(\symb{a_1, a_2, a_3,\ldots,a_n})=\delta(\symb{aa_1+ba_2, ca_1+da_2, a_3,\ldots,a_n}).
$$
Equivalently, we can view this as an action of $\SL_2(\bZ)$ on $(G^\vee)^2$. The action is in fact trivial in $\cM_n(G)$. It suffices to check this on generators of $\SL_2(\bZ)$: 
$$
\begin{pmatrix}0&1\\-1&0\end{pmatrix} \qquad\text{and}\qquad\begin{pmatrix}1&1\\0&1\end{pmatrix}.
$$ 
By symmetry, it is clear that 
$$
\delta(\symb{a_1, a_2,\ldots, a_n})=\delta(\symb{a_2, -a_1,\ldots,a_n}).
$$
On the other hand, one has
\begin{align*}
&\phantom{ ==}\delta(\symb{a_1+a_2, a_1, a_3, \ldots, a_n})\\
&=\symb{a_1+a_2,a_1,\ldots}+\symb{-a_1-a_2,-a_1,\ldots}+\symb{a_1+a_2,-a_1,\ldots}+\\
&\phantom{ ==}\symb{-a_1-a_2,a_1,\ldots}\\
&\phantom{ ==}\text{applying {\bf(M)} to the first two terms above}\\
&=\symb{a_1,a_2,\ldots}+\symb{-a_1,-a_2,\ldots}+\symb{a_1+a_2,-a_2,\ldots}+\\
&\phantom{ ==}\symb{-a_1-a_2,a_1,\ldots}+\symb{-a_1-a_2, a_2,\ldots}+\symb{a_1+a_2,-a_1,\ldots}\\
&\phantom{ ==}\text{applying ($\mathbf M$) to the last four terms above}\\
&=\symb{a_1,a_2,\ldots}+\symb{-a_1,-a_2,\ldots}+\symb{a_1,-a_2,\ldots}+\symb{-a_1,a_2,\ldots}\\
&=\delta (\symb{a_1,a_2,\ldots,a_n}).
\end{align*}
Consider 
\begin{align}
\label{eq:firstavg}
    S:=\sum_{a,b}\symb{a,b, a_3, \ldots, a_n},
\end{align}
where the sum runs over the $\SL_2(\bZ)$-orbit of $(a_1,a_2)$ in $(G^\vee)^2$. Observe that the orbit is finite as $G$ is a finite group. Applying relation {\bf(M)} to each term in the sum, one finds that
\begin{align*}
S&=\sum_{a,b}\symb{a-b,b, a_3, \ldots, a_n}+\symb{a,b-a, a_3, \ldots, a_n}\\
&=2\sum_{a,b}\symb{a,b, a_3, \ldots, a_n}
\end{align*}
since
$$
\begin{pmatrix}
a-b\\b\end{pmatrix}=\begin{pmatrix}
1&-1\\0&1\end{pmatrix}\cdot\begin{pmatrix}
a\\b\end{pmatrix},\quad \begin{pmatrix}
a\\b-a\end{pmatrix}=\begin{pmatrix}
1&0\\-1&1\end{pmatrix}\cdot\begin{pmatrix}
a\\b\end{pmatrix}.
$$
Similarly, averaging $\delta$ over this orbit leads to 
\begin{align*}
&\phantom{=}\sum_{a,b}\delta(\symb{a,b,a_3,\ldots,a_n})\\
&=\sum_{a,b}\symb{a,b,\ldots}+\symb{-a,b,\ldots}+\symb{a,-b, \ldots}+\symb{-a,b,\ldots}\\
&\phantom{==}\text{applying \eqref{eq:firstavg} to each term}\\
&=2\cdot\sum_{a,b}\delta(\symb{a,b,a_3,\ldots,a_n}).
\end{align*}
Recall that $\delta$ is invariant under the $\SL_2(\bZ)$-action. We conclude that
$$
\delta(\symb{a_1,\ldots, a_n})=0\in\cM_n(G)\otimes\bQ.
$$
\end{proof}

\begin{prop}\label{prop:kerisomo}
The map $\psi$ is well-defined over $\bQ$. In addition, $\nu_{\cK_n(G)}$ and $\psi$ are inverse to each other over $\bQ$.
\end{prop}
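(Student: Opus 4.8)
The plan is to prove three statements that together yield the proposition: (A) that $\psi$ descends to a well-defined $\bQ$-linear map on the source $\bigoplus_{G'}\cM_1^+(G')\otimes\cM_{n-1}^-(G/G')$; (B) that $\nu_{\cK_n(G)}\circ\psi$ is the identity; and (C) that $\psi$ is surjective over $\bQ$. Statement (B) already forces $\psi$ to be injective with left inverse $\nu_{\cK_n(G)}$; combining it with (C) makes $\psi$ bijective over $\bQ$ and identifies $\nu_{\cK_n(G)}=\psi^{-1}$, which is exactly the asserted mutual inversion.

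For (A), I would verify that $\psi$ respects each defining relation of the source. Compatibility with the reordering relation {\bf(O)} and the blow-up relation {\bf(M)} in the second tensor factor has already been recorded, and compatibility with {\bf(P)} in the first factor is the content of Lemma~\ref{lemm:psiwelldef}, which gives $\boldsymbol\gamma(a,\boldsymbol b)=\boldsymbol\gamma(-a,\boldsymbol b)$. The only remaining relation is the antisymmetry {\bf(A)} applied to an entry of the second factor; after using {\bf(O)} to move that entry into the second slot, the required identity reads $\langle a',b_1,\dots\rangle+\langle -a',b_1,\dots\rangle+\langle a',-b_1,b_2,\dots\rangle+\langle -a',-b_1,b_2,\dots\rangle=0$, which is precisely Lemma~\ref{lemm:orbstab} with $a_1=a'$ and $a_2=b_1$. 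Hence $\psi$ is well-defined over $\bQ$.

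For (B), write $\psi=\bigoplus_{G'}\psi_{G'}$ and $\nu_{\cK_n(G)}=\bigoplus_H\nu_H$ and compute block by block. The diagonal blocks $\nu_{G'}\circ\psi_{G'}=\mathrm{id}$ are exactly \eqref{eq:nudprop}. For an off-diagonal block with cyclic $H\neq G'$, I would analyze $\Delta_H$ applied to $\langle\pm a',b_1,\dots,b_{n-1}\rangle$ using $B'':=(G/H)^\vee\subset A$. The partition placing $\pm a'$ in the $\cM_1$-slot survives only if the $b_i$ span $B''$; but the $b_i$ span $A''=(G/G')^\vee$, and $A''=B''$ would force $H=G'$, so this partition contributes nothing. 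Every surviving partition instead sends some $b_j$ to the $\cM_1$-slot and keeps $\pm a'$ among the $\cM_{n-1}^-$-entries; the terms arising from $+a'$ and from $-a'$ then share the same first tensor factor and differ only by $a'\mapsto-a'$ inside the $\cM_{n-1}^-$-factor, so they cancel by {\bf(A)}. Thus $\Delta_H(\boldsymbol\gamma(a,\boldsymbol b))=0$, the off-diagonal blocks vanish, and (B) follows. I expect this off-diagonal vanishing to be the main obstacle, since it is the one step requiring genuine control over which proper cyclic subgroups $H$ can contribute to the co-multiplication, followed by a careful pairing of the surviving terms so that antisymmetry cancels them.

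For (C), recall that $\cK_n(G)$ is generated by the antisymmetry relators $\gamma(c_1;c_2,\dots,c_n):=\langle c_1,c_2,\dots,c_n\rangle+\langle -c_1,c_2,\dots,c_n\rangle$ over all generating symbols, flips of other coordinates being {\bf(O)}-equivalent. Given such a relator with $(c_2,\dots,c_n)$ not all zero, set $A'':=\sum_{i\geq2}\bZ c_i\subsetneq A$ and let $G'\subset G$ be the cyclic subgroup with $(G/G')^\vee=A''$; then $c_1$ is a lift of a generator $a$ of $A'=A/A''$, the $c_i$ span $A''$, and $\gamma(c_1;c_2,\dots,c_n)=2\,\psi(\boldsymbol\omega(a,\boldsymbol b))$ lies in the image of $\psi$. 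For the degenerate relators with $c_2=\dots=c_n=0$, Lemma~\ref{lemm:orbstab} with $a_1=c_1$, $a_2=0$ gives $2\,\gamma(c_1;0,\dots,0)=0$ in $\cM_n(G)\otimes\bQ$, so these vanish over $\bQ$ and contribute nothing; this secondary use of Lemma~\ref{lemm:orbstab} is the other point to handle with care. Therefore $\psi$ is surjective over $\bQ$, and (A)--(C) together complete the proof.
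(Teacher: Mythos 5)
Your proposal is correct and follows essentially the same route as the paper: it rests on the same generation of $\cK_n(G)$ by the relators $\boldsymbol\gamma$, the same key inputs (Lemmas~\ref{lemm:psiwelldef} and~\ref{lemm:orbstab}, and the identity \eqref{eq:nudprop}), with your step (B) spelling out the off-diagonal vanishing that the paper compresses into the single line $\nu_{\cK_n(G)}(\boldsymbol\gamma(a,\boldsymbol b))=\nu_{G'}(\boldsymbol\gamma(a,\boldsymbol b))$, your step (C) merely repackaging the paper's direct verification of $\psi\circ\nu_{\cK_n(G)}=\mathrm{id}$ as left-inverse-plus-surjectivity, and your explicit treatment of the degenerate relators with $c_2=\cdots=c_n=0$ (where the associated $G'$ would be all of $G$) supplying a detail the paper leaves implicit. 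One small correction in (C): the condition ``$A'':=\sum_{i\geq2}\bZ c_i\subsetneq A$'' should read $A''\neq 0$ (equivalently $G'\subsetneq G$), since $c_2,\ldots,c_n$ may already span $A$, in which case $G'$ is the trivial subgroup --- which is harmless, as the trivial subgroup is included in the direct sum, so your argument goes through unchanged.
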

\begin{proof}
The correctness of $\psi$ is due to Lemma~\ref{lemm:psiwelldef} and ~\ref{lemm:orbstab}. By definition, $\cK_n(G)$ is generated by 
$$
\boldsymbol\gamma(a, \boldsymbol b)=\symb{a,b_1,\ldots,b_{n-1}}+\symb{-a,b_1,\ldots,b_{n-1}}.
$$
Let $G'$ be the subgroup of $G$ such that 
$$
\sum_{i=1}^{n-1}\bZ b_i=(G/G')^\vee.
$$
The definition of the co-multiplication map ensures that
$$
\nu_{\cK_n(G)}(\boldsymbol\gamma(a,\boldsymbol b))=\nu_{G'}(\boldsymbol\gamma(a,\boldsymbol b))
$$
and one can deduce from \eqref{eq:nudprop} that
$$
\psi\circ\nu_{\cK_n(G)}(\boldsymbol\gamma(a,\boldsymbol b))=\psi(2\,\boldsymbol\omega(a,\boldsymbol b))=\boldsymbol\gamma(a,\boldsymbol b),
$$
where the last equality holds by Lemma~\ref{lemm:psiwelldef}. Similarly, for any
$$
\boldsymbol\omega(a,\boldsymbol b)=\symb{a}^+\otimes\symb{b_1,\ldots, b_{n-1}}^-\in\mathcal{M}_1^+(G')\otimes\mathcal{M}_{n-1}^-(G/G'),
$$
one has
$$
\nu_{\cK_n(G)}\circ\psi(\boldsymbol\omega(a,\boldsymbol b))=\nu_{\cK_n(G)}(\frac12\boldsymbol\gamma(a,\boldsymbol b))=\boldsymbol\omega(a,\boldsymbol b).
$$
It follows that $\psi$ and $\nu_{\cK_n(G)}$ are inverse to each other as homomorphisms between $\bQ$-vector spaces.
\end{proof}

\subsection*{Dimensional Formulae}Proposition~\ref{prop:kerisomo} provides an effective computation for 
$$
\dim(\cM_n(G)_\bQ)-\dim(\cM_n^-(G)_\bQ).
$$
In particular, it implies the hypothetical formula (note that the original formula in \cite[Section 11]{KPT} is wrong)
\begin{align*}
 &\phantom{===}\dim(\cM_2(C_N)_\bQ)-\dim(\cM_2^-(C_N)_\bQ)\\
 &\stackrel{N>5}{=}\begin{cases}
 \displaystyle{\frac{\phi(N)}{2}+\frac14\sum_{d\mid N,3\leq d\leq N/3}\phi(d)\phi(N/d)}& N \text{ odd},\\
  \displaystyle{\frac{\phi(N)+\phi(\frac N2)}{2}+\frac14\sum_{d\mid N,3\leq d\leq N/3}\phi(d)\phi(N/d)}& N \text{ even}.
 \end{cases}
\end{align*}
Combining this with Proposition~\ref{prop:mainformminus}, we obtain an effective computation for 
$$
\dim(\cM_2(G)_\bQ).
$$
For example, when $G=C_p\times C_p$, $p$ an odd prime, one has
$$
\dim(\cM_2(C_p\times C_p)\otimes\bQ)-\dim(\bQ\otimes\cM^-_2(C_p\times C_p))=\cfrac{(p+1)(p-1)^2}{4}
$$ 
and thus
$$
\dim(\cM_2(C_p\times C_p)\otimes\bQ)=\frac{(p-1)(p^3+6p^2-p+6)}{24},
$$ 
which is consistent with results of computer experiments recorded in Section~\ref{sect:back}.

\bibliographystyle{plain}
\bibliography{mod}

\begin{thebibliography}{10}

\bibitem{cremonagamma1}
John~E. Cremona.
\newblock Modular symbols for {$\Gamma_1(N)$} and elliptic curves with
  everywhere good reduction.
\newblock {\em Math. Proc. Cambridge Philos. Soc.}, 111(2):199--218, 1992.

\bibitem{diamondshurman}
Fred Diamond and Jerry Shurman.
\newblock {\em A first course in modular forms}, volume 228 of {\em Graduate
  Texts in Mathematics}.
\newblock Springer-Verlag, New York, 2005.

\bibitem{HKTsmall}
Brendan Hassett, Andrew Kresch, and Yuri Tschinkel.
\newblock Symbols and equivariant birational geometry in small dimensions.
\newblock In {\em Rationality of varieties}, volume 342 of {\em Progr. Math.},
  pages 201--236. Birkh\"{a}user/Springer, Cham, 2021.

\bibitem{KPT}
Maxim Kontsevich, Vasily Pestun, and Yuri Tschinkel.
\newblock Equivariant birational geometry and modular symbols.
\newblock {\em J. Eur. Math. Soc. (JEMS)}, 25(1):153--202, 2023.

\bibitem{BnG}
Andrew Kresch and Yuri Tschinkel.
\newblock Equivariant birational types and {B}urnside volume.
\newblock {\em Ann. Sc. Norm. Super. Pisa Cl. Sci. (5)}, 23(2):1013--1052,
  2022.

\bibitem{KT-vector}
Andrew Kresch and Yuri Tschinkel.
\newblock Equivariant {B}urnside groups and representation theory.
\newblock {\em Selecta Math. (N.S.)}, 28(4):Paper No. 81, 39, 2022.

\bibitem{maninmodsymb}
Yuri~I. Manin.
\newblock Parabolic points and zeta functions of modular curves.
\newblock {\em Izv. Akad. Nauk SSSR Ser. Mat.}, 36:19--66, 1972.

\bibitem{shimuraautomorphic}
Goro Shimura.
\newblock {\em Introduction to the arithmetic theory of automorphic functions}.
\newblock Kan\^{o} Memorial Lectures, No. 1. Iwanami Shoten Publishers, Tokyo;
  Princeton University Press, Princeton, N.J., 1971.

\bibitem{TYZ}
Yuri Tschinkel, Kaiqi Yang, and Zhijia Zhang.
\newblock Combinatorial {B}urnside groups.
\newblock {\em Res. Number Theory}, 8(2):Paper No. 33, 2022.

\bibitem{TYZ-3}
Yuri Tschinkel, Kaiqi Yang, and Zhijia Zhang.
\newblock Equivariant birational geometry of linear actions, 2023.
\newblock {\tt arXiv:2302.02296}.

\end{thebibliography}
\end{document}